\title{The Least-Perimeter Partition of a Sphere into Four Equal Areas}
\author{Max Engelstein}
\date{\today}
\address{Department of Mathematics, Yale University, New Haven, CT 06520}
\email{max.engelstein@yale.edu}
\keywords{Minimal partitions, isoperimetric problem, tetrahedral partition}
\subjclass{53C42}
\begin{document}

\newtheorem{existence}{Theorem}[section]
\newtheorem{regularity}[existence]{Theorem}
\newtheorem{stability}[existence]{Theorem}
\newtheorem{definitions1}[existence]{Definition}
\newtheorem{definitions2}[existence]{Definition}
\newtheorem{assumegeodesic}[existence]{Theorem}
\newtheorem{assumer1conn}[existence]{Theorem}
\newtheorem{definitions3}[existence]{Definition}
\newtheorem{nodigons}[existence]{Corollary}
\newtheorem{oddcycle}[existence]{Lemma}
\newtheorem{circlebound}{Lemma}[section]
\newtheorem{partitionbound}[circlebound]{Corollary}
\newtheorem{numericalestimate}[circlebound]{Corollary}
\newtheorem{numericalestimatetwo}[circlebound]{Proposition}
\newtheorem{squarerootinequality}[circlebound]{Lemma}
\newtheorem{quinnsgaussbonnet}[circlebound]{Lemma}
\newtheorem{maxselimination}[circlebound]{Corollary}
\newtheorem{firstcurvaturebound}{Lemma}[section]
\newtheorem{rtwotriangles}[firstcurvaturebound]{Proposition}
\newtheorem{nottoolong}[firstcurvaturebound]{Corollary}
\newtheorem{nottoolong3}[firstcurvaturebound]{Corollary}
\newtheorem{generalizedtriangleinequality}[firstcurvaturebound]{Lemma}
\newtheorem{secondcurvaturebound}[firstcurvaturebound]{Lemma}
\newtheorem{rthreetriangles}[firstcurvaturebound]{Proposition}
\newtheorem{r4curvaturebound}{Lemma}[section]
\newtheorem{trianglesides}[r4curvaturebound]{Lemma}
\newtheorem{maintheorem}[r4curvaturebound]{Theorem}

\begin{abstract}
We prove that the least-perimeter partition of the sphere into four regions of equal area is a tetrahedral partition.
\end{abstract}

\maketitle
\tableofcontents

\section{Introduction}\label{sec: intro}
The spherical partition problem asks for the least-perimeter
partition of $\mathbb S^2$ into $n$ regions of equal area. The
corresponding planar ``Honeycomb Conjecture," open since antiquity
and finally proven by Hales \cite{honeycombplane} in 2001, states
that the regular hexagonal tiling provides a least-perimeter way to
partition the plane into unit areas. There are five analogous
partitions of the sphere into congruent, regular spherical polygons
meeting in threes (see Figure \ref{fig: geodesic nets}; for why the
edges must meet in threes, see Theorem \ref{thm: existence}), three
of which have already been proven minimizing: $n=2$, a great circle
(Bernstein \cite{bernstein}), $n=3$, three great semi-circles
meeting at 120 degrees at antipodal points (Masters \cite{masters}),
and $n=12$, a dodecahedral arrangement (Hales \cite{dodecahedral}).
The other two, the $n=4$ tetrahedral and the $n=6$ cubical
partitions, were conjectured to be minimizing. In this paper we
prove the $n=4$ conjecture:

\vspace{.1 cm}

\noindent {\bf Theorem \ref{thm: maintheorem}.} {\it The
least-perimeter partition of the sphere into four equal areas is the
tetrahedral partition.}

\vspace{.1 cm}

The main difficulty is that in principle each region may have many
components. Earlier results by Fejes T\'{o}th \cite{toth}, Quinn
\cite{quinn}, and Engelstein {\it et al}. \cite{emmp} required
additional assumptions to avoid a proliferation of cases.

Our approach starts with easy estimates to show that each region
must have one component that encloses the bulk of the area in that
region (Proposition \ref{thm: numerical estimates two}). Examination
of the curvature of the interfaces leads to the result that three of
the four regions must contain a triangle with large area (Corollary
\ref{thm: maxs elimination}, Propositions \ref{thm: r two triangles}
and \ref{thm: r three triangles}). Finally in Section \ref{sec:
final section} we examine the fourth region and conclude that the
tetrahedral partition is minimizing (Theorem \ref{thm:
maintheorem}).

Our proof requires little background knowledge beyond what is
discussed in Section \ref{sec: history and definitions}. With the
exception of the Gauss-Bonnet theorem, all the ideas and techniques
presented are covered by an introductory calculus course.

{\bf Acknowledgements:} This work began with the 2007 Williams
College National Science Foundation SMALL undergraduate research
Geometry Group. The author thanks the NSF and Williams College for
their support of the program. He thanks the 2007 Geometry Group
(Anthony Marcuccio, Quinn Maurmann, and Taryn Pritchard). He thanks
Conor Quinn and Edward Newkirk for their continuing involvement and
Nate Harman for his completion of the proof of Proposition \ref{thm:
numerical estimates two}. He thanks Sean Howe and Kestutis
Cesnavicius for their helpful comments on earlier drafts. He thanks
Yale University for travel support. Finally the author would like to
thank Frank Morgan, under whose guidance this research began and
without whose patience and persistence its completion would not have
been possible.

\section{Background and definitions}\label{sec: history and definitions}
Before delving into the particulars of the $n=4$ case we recall more
general results on the existence and regularity of minimizers.

\begin{existence}[Existence: \cite{soapbubbles}, Thm. 2.3 and Cor. 3.3]
\label{thm: existence} Given a smooth compact Riemannian surface $M$
and finitely many positive areas $A_i$ summing to the total area of
$M$, there is a least-perimeter partition of $M$ into regions of
area $A_i$. It is given by finitely many constant-curvature curves
meeting in threes at 120 degrees at finitely many points.
\end{existence}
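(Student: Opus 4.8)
The plan is to separate the two assertions: existence of a minimizer, handled by the direct method of the calculus of variations, and its regularity, handled by a blow-up analysis at the singular set. First I would set up the variational problem in the class of finite-perimeter (Caccioppoli) partitions: a partition into regions of areas $A_i$ is an ordered tuple of finite-perimeter sets $(E_1,\dots,E_k)$ covering $M$ up to measure zero, pairwise disjoint, with $|E_i|=A_i$, and the functional is the total length of the interfaces, each counted once. The infimum is finite (any explicit competitor bounds it). Given a minimizing sequence, the uniform perimeter bound yields, by the $BV$-compactness theorem on the compact manifold $M$, a subsequence converging in $L^1$ to a limit tuple. The constraints $|E_i|=A_i$ pass to the limit under $L^1$ convergence, and lower semicontinuity of perimeter under $L^1$ convergence shows that the limit attains the infimum, so a minimizer exists.

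For regularity, the starting point is that a minimizing partition is locally almost-minimizing in the sense of Almgren: the area constraints can be restored after a small localized modification by a compactly supported volume-fixing deformation whose perimeter cost is of higher order, so competing networks obtained by small surgeries cannot strictly decrease length. I would then apply the structure theory for such one-dimensional almost-minimizers in a two-dimensional ambient space. The first variation under the area constraints shows that along each smooth arc the geodesic curvature is constant, equal to the difference of the Lagrange multipliers of the two adjacent regions; this produces the constant-curvature curves. The singular set consists of isolated points, and at each of them a rescaling limit (blow-up) is a length-minimizing cone in the tangent plane $\mathbb{R}^2$.

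The decisive step, and the main obstacle, is the classification of these planar minimizing cones. One shows that a cone of $m$ rays from the origin can be shortened by a local competitor whenever $m\ge 4$, since desingularizing a would-be high-order junction into separated triple points strictly decreases total length; and that any two meeting rays must subtend $120$ degrees, because otherwise one perturbs the junction to decrease length while preserving the enclosed areas to first order. Hence the only admissible cones are a single line (a regular interior point of an arc) and three rays at $120$ degrees (a triple junction). Finiteness of the number of such junctions then follows from a density lower bound, via monotonicity, together with the perimeter bound. The analytic heart of the argument is this angle balancing and the exclusion of higher-order junctions; the remainder is soft compactness and standard first-variation bookkeeping.
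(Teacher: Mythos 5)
The paper does not prove this statement: it is quoted as background, attributed to Morgan's \emph{Soap bubbles in ${\bf R}^2$ and in surfaces} (the reference \cite{soapbubbles}), so there is no in-paper proof to compare against. Your outline is, in substance, the argument of that cited source: direct method in the class of finite-perimeter partitions (compactness and lower semicontinuity in $BV$, constraints passing to the $L^1$ limit) for existence, then almost-minimality via volume-fixing variations, constant geodesic curvature from the first variation with Lagrange multipliers, blow-up to length-minimizing cones in the tangent plane, and the Steiner-type classification showing the only such cones are a line or three rays at $120$ degrees, with finiteness of junctions from a density lower bound. As a sketch this is correct and is the standard route; the places that require genuine work and are only asserted here are the volume-fixing deformation lemma (that areas can be restored at higher-order perimeter cost), the monotonicity/density estimates justifying the blow-up and the finiteness of the singular set, and the verification that the local shortening competitors for cones with $m\ge 4$ rays respect the partition structure. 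None of these is a gap in the sense of a wrong idea, but a complete proof would need to supply them, as the cited reference does.
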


It is important to note here that the edges of a minimizing
partition are not assumed to be geodesic. In fact, Lamarle
\cite{lamarle} and Heppes \cite{heppes} proved that there are only
ten nets of geodesics meeting in threes at 120 degrees on the
sphere. These nets are depicted in Figure \ref{fig: geodesic nets}
and include the previously proved minimizers for $n=2$, $n=3$, and
$n=12$ and the conjectured minimizers for $n=4$ and $n=6$. For other
values of $n$ the solution cannot be geodesic polygons. However,
Maurmann {\it et al.} \cite{memp} did show that, asymptotically, the
perimeter of the solution to the spherical partition problem
approaches that of the hexagonal tiling on the plane as $n$
approaches infinity.

\begin{figure}[b]
\begin{center}\scalebox{2}{\includegraphics{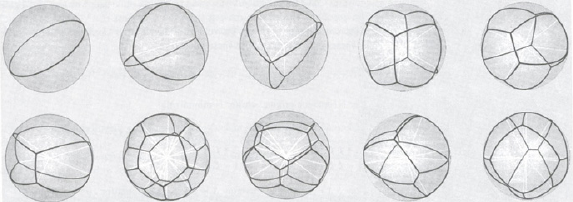}}\end{center}
\caption{The ten partitions of the sphere by geodesics meeting in
threes at $120$ degrees (picture originally from Almgren and Taylor
\cite{geodesicnets}, \copyright 1976 Scientific American).}
\label{fig: geodesic nets}
\end{figure}

A further regularity condition involves the concept of pressure:

\begin{stability}[\cite{quinn}, Prop. 2.5]
\label{thm: stability} In a perimeter-minimizing partition each
region has a pressure, defined up to addition of a constant, so that
the difference in pressure between regions $A$ and $B$ is the sum of
the (signed) curvatures crossed by any path from the interior of $B$
to the interior of $A$.
\end{stability}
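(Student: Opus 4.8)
The plan is to recognize the pressure as the Lagrange multiplier attached to the area constraint, and the asserted curvature relation as the corresponding Young--Laplace (force-balance) law. The minimizer is a critical point of the perimeter $P$ subject to the constraints $\mathrm{Area}(R_i)=A_i$, so first I would set up the first-variation machinery for normal variations of the interfaces and extract from it a single constant associated to each region.

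Concretely, I would take a smooth normal variation $\phi\,\nu$ supported in the interior of one edge $e$ separating regions $R_a$ and $R_b$, where $\nu$ is the unit normal pointing into $R_a$. For a suitable orientation of the geodesic curvature $\kappa_g$ (which is constant on $e$ by Theorem~\ref{thm: existence}), the first variation of length is $-\int_e \kappa_g\,\phi\,ds$, while the first variations of the two adjacent areas are $+\int_e\phi\,ds$ and $-\int_e\phi\,ds$ and all other areas are unchanged, since the variation is supported away from the triple junctions. Minimality among area-preserving competitors then forces the existence of constants $\lambda_1,\dots,\lambda_n$, one per region, with $\kappa_g=\lambda_b-\lambda_a$ on every interface; I would define the pressure of $R_i$ to be $\lambda_i$. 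Because the constraints obey the single relation $\sum_i\mathrm{Area}(R_i)=\mathrm{Area}(\mathbb S^2)$, the $\lambda_i$ are determined only up to a common additive constant, which is exactly the claimed ambiguity.

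With this relation in hand, the path statement is a telescoping identity. A path from the interior of $B$ to the interior of $A$ meets a finite sequence of regions $B=R_{i_0},R_{i_1},\dots,R_{i_k}=A$, crossing interfaces whose signed curvatures are $\lambda_{i_1}-\lambda_{i_0},\dots,\lambda_{i_k}-\lambda_{i_{k-1}}$, and these sum to $\lambda_A-\lambda_B$ independently of the path. Equivalently, one may define the pressure directly through such path-sums and verify well-definedness by checking that the signed curvatures cancel around every loop, which reduces to the balancing identity $\kappa_{12}+\kappa_{23}+\kappa_{31}=0$ at each triple junction; that identity is immediate once each $\kappa_{ij}=\lambda_j-\lambda_i$, or can be obtained on its own from a variation that slides the junction.

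I expect the main obstacle to be the rigorous justification of the Lagrange-multiplier step: one must verify the relevant constraint qualification, namely that the only linear dependence among the area-constraint variations is the obvious relation coming from $\sum_i\mathrm{Area}(R_i)$ being fixed, and one must arrange competing variations that genuinely preserve all the areas simultaneously. Restricting attention to variations supported in the interiors of the edges is what makes this tractable: it avoids any analysis at the singular triple junctions and reduces the whole argument to the pointwise identity $\kappa_g=\lambda_b-\lambda_a$ on each smooth arc, after which the global consistency and the path-independence follow formally.
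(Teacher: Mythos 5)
The paper does not prove this statement: it is quoted verbatim from Quinn \cite{quinn}, Prop.~2.5, so there is no in-paper argument to compare against. Judged on its own, your outline is the standard and correct one. The one place where the wording is slightly too quick is the sentence ``Minimality among area-preserving competitors then forces the existence of constants $\lambda_1,\dots,\lambda_n$'': a variation supported on a single edge changes two of the areas, so it is not itself an admissible competitor, and the multipliers only appear once you combine variations on different edges (equivalently, once you know the image of $\phi\mapsto(D\,\mathrm{Area}_i(\phi))_i$ is exactly the hyperplane $\sum_i x_i=0$, which uses connectedness of the adjacency graph of the regions). You do flag exactly this constraint-qualification issue as the main obstacle, and your fallback --- define the pressure by path sums and prove path-independence by checking that signed curvatures cancel around loops, reducing to the cocycle identity $\kappa_{ij}+\kappa_{jk}+\kappa_{ki}=0$ at triple points on the simply connected sphere --- is in fact closer to how such results are usually written up: if two paths from $B$ to $A$ gave different curvature sums, transferring a small area $\delta$ from $B$ to $A$ along the cheaper path and returning it along the dearer one preserves all areas to first order and strictly decreases perimeter, contradicting minimality. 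Either route works; the abstract Lagrange-multiplier version is cleaner to state, while the explicit area-transfer version avoids having to set up a differentiable structure on the space of partitions. For completeness you would also want to note that the path may be taken to avoid the finitely many triple points and to cross the edges transversally finitely often, so the sum in the statement is well defined.
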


\begin{definitions1}
\label{def: one} {\normalfont Following Quinn \cite{quinn}, we refer
to one highest-pressure region as $R_1$, and then in order of
decreasing pressure $R_2, R_3$ and $R_4$. Let $\kappa_{ij}$ be equal
to the pressure of $R_i$ minus the pressure of $R_j$. Note that
$\kappa_{ij} \geq 0$ if $i < j$ and $\kappa_{ij} = -\kappa_{ji}$.
Theorems \ref{thm: existence} and \ref{thm: stability} imply that
every edge between $R_i$ and $R_j$ has (signed) curvature
$\kappa_{ij}$.}
\end{definitions1}

With such strong combinatorial and geometric restrictions on
perimeter-minimizing partitions it may be tempting to dismiss the
spherical partition problem as a simple exercise in case analysis.
The crux of the difficulty (as we mentioned in the introduction) is
that disconnected regions are allowed. That regions can, {\it a
priori}, have a finite arbitrary number of components renders a
n\"{a}ive case analysis almost impossible. On the other hand, under
the strong assumption that each region is convex, Fejes T\'{o}th
\cite{toth} proved that each of the partitions in Figure \ref{fig:
geodesic nets} is minimizing for the areas that it encloses (this
also follows easily from the classification of geodesic nets of
Figure \ref{fig: geodesic nets}). For the case of $n=4$, Conor Quinn
proved the following, stronger result:

\begin{assumer1conn}[\cite{quinn} see Thm. 5.2]
\label{thm: assume r1 conn} In a perimeter-minimizing partition of
the sphere into four equal areas, if $R_1$ is connected, then that
partition is tetrahedral.
\end{assumer1conn}

This suggests suggest a more focused analysis on the components of
$R_1$. In order to avoid confusion we clarify some of our
terminology in this manner.

\begin{definitions3}
\label{def: three} {\normalfont In this paper an} $m$-{\it gon}
{\normalfont refers to a spherical polygon with $m$ edges, each with
constant curvature. We write} {\it digon} {\normalfont instead of
$2$-gon and often use the colloquial triangle, quadrilateral, or
pentagon for $3$-gon, $4$-gon, or $5$-gon. Finally we may abuse
terminology and use $m$-gon to refer to both the polygon and the
region bounded by that polygon (allowing us to refer to the ``area"
of an $m$-gon).}
\end{definitions3}

Before we delve into the analysis let us recall two more results.
The first is due to Quinn \cite{quinn} and is a corollary of Theorem
\ref{thm: existence}.

\begin{nodigons}[\cite{quinn} Lemma 2.11]
\label{thm: digons} A perimeter-minimizing partition of the sphere
does not contain a set of components whose union is a digon, with
distinct incident edges.
\end{nodigons}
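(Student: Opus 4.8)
The plan is to first pin down exactly what the local regularity of Theorem \ref{thm: existence} forces on such a digon, and then to contradict minimality by a length-reducing, area-preserving competitor. Since a digon has exactly two edges, its boundary is a pair of constant-curvature arcs $e_1,e_2$ joining two vertices $v_1,v_2$; each $e_i$ is a single edge and so carries no interior vertex, which means no other edge of the net may meet the interior of $\partial D$ and no internal edge can occur (an internal edge would have to terminate on $\partial D$ or at a $v_i$, forcing a fourth edge at a $120^\circ$ vertex). Hence the union is in fact a single lens-shaped component of one region $R_a$, with $e_1$ separating $R_a$ from a region $R_b$ and $e_2$ separating it from $R_c$. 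Reading off the three sectors at $v_1$ (and likewise at $v_2$), the incident edge $f_1$ is squeezed between the $R_b$-side and the $R_c$-side, so $f_1$ is an $R_b\,|\,R_c$ interface; the same holds for $f_2$. In particular $R_b\neq R_c$ (an edge cannot separate a region from itself), and both incident edges carry the same curvature $\kappa_{bc}$ of Definition \ref{def: one}. The hypothesis $f_1\neq f_2$ says the $R_b\,|\,R_c$ interface genuinely comes into $v_1$ and resumes at $v_2$ on the far side of the lens, rather than being a single arc $f$ closing up $v_1$ to $v_2$ (the latter excluded case is just the legitimate three-arc configuration of the $n=3$ type, which is why distinctness must be assumed).

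With this picture the strategy is clear: the $R_b\,|\,R_c$ interface is being forced to detour around the lens, and a lens bounded by two cornered arcs is isoperimetrically wasteful. I would build a competitor by splicing the two loose ends $f_1$ and $f_2$ together with a single short $R_b\,|\,R_c$ arc $g$ running across the lens's footprint, deleting the two boundary arcs $e_1,e_2$ of the lens entirely. This is precisely where the distinctness $f_1\neq f_2$ is used: there are two genuine ends to reconnect, so the surgery is well defined and eliminates the cell rather than merely merging one arc. A single arc $g$ from $v_1$ to $v_2$ is strictly shorter than the two bulging arcs $e_1\cup e_2$ that enclose positive area, so this move saves a definite amount of length. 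It transfers the lens's area $\alpha=\mathrm{Area}(D)$ away from $R_a$, and I would restore the four areas by a compensating displacement of a nearby $R_a$ interface, keeping the rest of the partition untouched.

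The main obstacle is the quantitative bookkeeping of this last step: I must show that the definite length saved by removing the cornered lens, namely $(L_1+L_2)-L_g>0$, strictly exceeds the length spent restoring the area $\alpha$ to $R_a$. I would control the restoration cost by routing the area transfer along an already-existing interface, where the marginal cost of sweeping area is finite and governed by that interface's curvature and the $120^\circ$ angle conditions. Crucially, the earlier structural result (Proposition \ref{thm: numerical estimates two}) guarantees that each region is dominated by a single large component, so the stray lens $D$ has small area $\alpha$; this lets me make the restoration cost as small as desired relative to the fixed isoperimetric deficit of the lens, closing the comparison. Making the saving $(L_1+L_2)-L_g$ uniformly bounded below in terms of $\alpha$ — rather than also shrinking with $\alpha$ — is the delicate point, and I expect to extract it from the fact that the two arcs meet at genuine $120^\circ$ corners of fixed opening, so their total length cannot be well approximated by any single arc subtending the same endpoints.
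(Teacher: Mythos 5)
First, a point of comparison: the paper does not prove Corollary \ref{thm: digons} at all --- it is imported verbatim from Quinn [Q07, Lemma 2.11] --- so there is no in-paper argument to measure yours against; I can only assess the proposal on its own terms. Your reading of the local structure is correct and worth keeping: the two boundary arcs share a common region $R_a$ on the inside, the incident edges $f_1,f_2$ are both $R_b\,|\,R_c$ interfaces of curvature $\kappa_{bc}$ with $R_b\neq R_c$, and distinctness of $f_1,f_2$ is exactly what makes a ``delete the lens and splice'' surgery meaningful. But two of your steps do not hold up. The lesser one: ``no internal edge can occur'' only excludes interior edges that reach $\partial D$ or the two vertices; the hypothesis ``a set of components whose union is a digon'' expressly allows closed interior interfaces (circles, or entire floating sub-nets with their own vertices), so the lens need not be a single component of a single region, and your surgery then changes the areas of more regions than you account for.

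The essential gap is the quantitative step you defer to the end. You must show that the length saved, $(L_1+L_2)-L_g$, exceeds the cost of restoring the area $\alpha$ of the lens to $R_a$. But the generic cost of transferring area $\alpha$ between regions is isoperimetric, of order $B(\alpha)=\sqrt{\alpha(4\pi-\alpha)}$, which is the same order as $L_1+L_2$ itself (Lemma \ref{thm: circlebound} gives $L_1+L_2\geq B(\alpha)$, and for a lens with $120^\circ$ tips it is not much larger), so the naive comparison does not close. To beat it you must restore area at first order by sliding existing interfaces, at cost roughly $|\kappa|\alpha+O(\alpha^2)$, and that only wins when $\alpha$ is small. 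Your appeal to Proposition \ref{thm: numerical estimates two} to guarantee smallness fails on two counts: that proposition is specific to the partition into four equal areas, whereas Corollary \ref{thm: digons} is stated for an arbitrary minimizing partition of the sphere; and even in the four-region case it does not prevent the digon from being the large component of its region, or a union of components of total area up to $\pi$ or more. There are also unaddressed geometric issues with the splice itself: the arc $g$ need not lie in $\overline{D}$, since both arcs of a $120^\circ$-tipped lens can leave a vertex on the same side of the connecting geodesic, so $f_1\cup g\cup f_2$ may collide with the rest of the net. As it stands the proposal is a program rather than a proof; the step you flag as ``delicate'' is the entire content of the lemma.
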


From this it easily follows that in a non-tetrahedral partition no
two triangles share an edge. Our second result hinges on the easy
observation that no two components of the same region may share an
edge.

\begin{oddcycle}
\label{thm: oddcycle} In a perimeter-minimizing partition of the
sphere into four equal areas any component with an odd number of
sides is incident to at least one component from every other region.
\end{oddcycle}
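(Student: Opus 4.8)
The plan is to reduce the statement to a parity count on the cyclic sequence of edges around the given component. Let $C$ be a component lying in one of the four regions, say $R_i$, and having an odd number $m$ of sides. Each edge of $\partial C$ lies on the interface between $C$ and a component of some other region; by the observation that no two components of the same region share an edge, that neighboring region is one of the three indices different from $i$. First I would travel once around $\partial C$ and record, in cyclic order, the labels $\ell_1, \ell_2, \dots, \ell_m$ of the neighboring regions, each $\ell_k$ drawn from this three-element set. The goal is then exactly to show that all three of these labels actually occur, since that is what it means for $C$ to be incident to a component of every other region.

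The key local step is to show that consecutive edges of $\partial C$ border \emph{distinct} regions, i.e. $\ell_k \neq \ell_{k+1}$ for every $k$ (indices read modulo $m$, so that the wrap-around pair $\ell_m,\ell_1$ is included). Consider two consecutive edges $e_k, e_{k+1}$ of $C$ meeting at a vertex $v$. By Theorem \ref{thm: existence} exactly three edges meet at $v$ at $120$ degrees: the two edges $e_k, e_{k+1}$ of $C$ and a third edge $e'$. These cut a neighborhood of $v$ into three sectors; the sector between $e_k$ and $e_{k+1}$ belongs to $R_i$, while the remaining two sectors lie on the two sides of $e'$ and are precisely the regions adjacent to $C$ across $e_k$ and across $e_{k+1}$. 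Hence if $e_k$ and $e_{k+1}$ were both adjacent to the same region $R_a$, the edge $e'$ would have $R_a$ on both of its sides. This is impossible for a minimizer: such an interface between a region and itself carries zero curvature and could be deleted to strictly decrease perimeter (equivalently, it would force two distinct components of $R_a$ to share the edge $e'$, contradicting the no-shared-edge observation). Thus $\ell_k \neq \ell_{k+1}$.

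With this alternation in hand the conclusion is immediate. Since $m \geq 3$ and consecutive labels differ, at least two distinct labels occur; I would then argue that if \emph{only} two labels appeared, say $a$ and $b$, the cyclic sequence would be forced to alternate $a,b,a,b,\dots$, which closes up consistently only when $m$ is even. As $m$ is odd this cannot happen, so all three available labels must occur, and $C$ is incident to a component of each of the other three regions, as claimed. The only point requiring genuine care is the local step of the second paragraph — the combinatorial punchline is then just the familiar fact that an odd cycle admits no proper $2$-coloring — so I expect the verification that three distinct regions meet at each vertex of $C$ to be the main obstacle.
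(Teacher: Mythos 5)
Your proof is correct and follows exactly the route the paper intends: the paper states this lemma without a written proof, remarking only that it ``hinges on the easy observation that no two components of the same region may share an edge,'' which is precisely your key local step ruling out $\ell_k = \ell_{k+1}$ at each vertex, followed by the observation that an odd cycle admits no proper $2$-coloring. Nothing further is needed.
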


Specifically a triangle is adjacent to exactly one component from
every other region. With all this in mind we can now move on to the
numerical analysis of Section \ref{sec: num bounds}.

\section{Area bounds}\label{sec: num bounds}

In this section we show that every region must consist of one large
component and then perhaps several small components (Proposition
\ref{thm: numerical estimates two}). Using the isoperimetric
inequality on the sphere and the length of the tetrahedral partition
we are able to establish strict upper bounds on the perimeter of any
one region in a potential minimizer. Our starting point is the
famous isoperimetric inequality of Bernstein.

\begin{circlebound}\cite{bernstein}
\label{thm: circlebound} For given area $0<A<4\pi$, a curve
enclosing area $A$ on the unit sphere has perimeter $P \geq B(A)=
\sqrt{A(4\pi-A)}$, with equality only for a single circle.
\end{circlebound}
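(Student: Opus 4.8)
The plan is to reduce to a first-variation computation and then evaluate the perimeter of the extremal circle explicitly. A closed curve enclosing area $A$ splits $\mathbb{S}^2$ into two regions, of areas $A$ and $4\pi - A$, so applying the existence and regularity result (Theorem \ref{thm: existence}) to the partition of the sphere into these two areas produces a perimeter-minimizing curve consisting of a single closed curve of constant geodesic curvature. Without invoking that theorem I would instead produce a minimizer by a standard compactness argument in the space of finite-perimeter sets and then identify constant geodesic curvature as the Euler--Lagrange condition for minimizing length subject to the area constraint, the curvature playing the role of a Lagrange multiplier.

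The next step is to show that a closed curve of constant geodesic curvature on the unit sphere must be a circle. One clean way is to observe that such a curve is a small circle consisting of all points at a fixed spherical distance $\theta$ from some pole: computing the geodesic curvature of the circle at polar angle $\theta$ and matching it to the prescribed constant pins down $\theta$, so up to rotation the minimizer is this circle. It then remains only to compute.

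The circle at polar angle $\theta$ has circumference $2\pi \sin\theta$ and bounds a spherical cap of area $\int_0^\theta 2\pi \sin\phi \, d\phi = 2\pi(1 - \cos\theta)$. Setting this area equal to $A$ gives $\cos\theta = 1 - \tfrac{A}{2\pi}$, whence
\[
\sin\theta = \sqrt{(1-\cos\theta)(1+\cos\theta)} = \sqrt{\tfrac{A}{2\pi}\cdot\tfrac{4\pi - A}{2\pi}} = \frac{1}{2\pi}\sqrt{A(4\pi - A)},
\]
so the perimeter is $P = 2\pi\sin\theta = \sqrt{A(4\pi - A)} = B(A)$. Since any minimizer is forced to be such a circle, and the circle enclosing area $A$ is unique up to rotation with perimeter exactly $B(A)$, equality in $P \geq B(A)$ can hold only for a single circle.

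I expect the only real obstacle to be the global step rather than the computation: guaranteeing that the constant-curvature critical curve is the genuine global minimizer, and that no non-circular competitor can tie it. This is exactly where existence and regularity (or, alternatively, a spherical symmetrization argument that decreases perimeter while preserving enclosed area) carries the weight; the remaining differential-geometric identification of constant-curvature curves as circles and the area–perimeter computation are routine.
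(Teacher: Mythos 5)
The paper does not actually prove this lemma: it is quoted from Bernstein \cite{bernstein} and used as a black box, so there is no internal proof to compare against. Your outline follows a reasonable modern route (existence and regularity of a minimizer, classification of the minimizer, explicit computation), and the computational half is correct and complete: the cap of area $A$ has $\cos\theta = 1 - A/(2\pi)$ and circumference $2\pi\sin\theta = \sqrt{A(4\pi-A)}$, and uniqueness of the extremal circle up to rotation follows once the structural half is in place.

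The gap is precisely that structural half. Theorem \ref{thm: existence} produces a minimizing partition into areas $A$ and $4\pi-A$ whose boundary is a \emph{finite union} of constant-curvature curves; with only two regions there are no triple points (the three sectors at such a point cannot be two-colored with adjacent sectors distinct), so the boundary is a disjoint union of simple closed curves, all of the same signed curvature by Theorem \ref{thm: stability}, hence a disjoint union of congruent circles. Nothing up to this point says there is only \emph{one} such circle, and ruling out two or more is exactly the isoperimetric content of the lemma. That is the step you flag as ``where existence and regularity carries the weight'' but do not execute: one needs, for instance, the strict subadditivity $B(a)+B(b) > B(a+b)$ coming from strict concavity of $B$ together with $B(0)=0$ (after first reducing to the case where the components of the smaller region are disjoint caps, which itself requires an argument when the circles are nested), or else a second-variation/instability argument showing a configuration with more than one boundary circle cannot minimize. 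Similarly, the claim that a closed curve of constant geodesic curvature on $\mathbb{S}^2$ is a planar circle is true and standard but is asserted rather than proved. As written, the proposal correctly computes the candidate value $B(A)$ but does not yet establish that $B(A)$ is the minimum over all competitors enclosing area $A$.
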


Note that Lemma \ref{thm: circlebound} gives a lower bound for the
perimeter of a region with area $A$ even when the region is
comprised of several connected components.

\begin{partitionbound}
\label{thm: partitionbound} Given a partition of the sphere into $n$
equal areas, the total perimeter of the partition is greater than
$2\pi\sqrt{n-1}$.
\end{partitionbound}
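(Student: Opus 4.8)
The plan is to apply the isoperimetric inequality of Lemma \ref{thm: circlebound} to each of the $n$ regions separately and then sum, using the fact that every edge is shared by exactly two regions to relate the sum of the regional perimeters to the total perimeter of the partition.

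First I would record that in a partition into $n$ equal areas each region has area $A = 4\pi/n$, since the sphere has total area $4\pi$. Substituting into the bound $B(A) = \sqrt{A(4\pi - A)}$ gives, for the perimeter $P_i$ of each region $R_i$,
\[
P_i \geq \sqrt{\frac{4\pi}{n}\left(4\pi - \frac{4\pi}{n}\right)} = \frac{4\pi\sqrt{n-1}}{n}.
\]
This is valid even when $R_i$ is disconnected, by the remark following Lemma \ref{thm: circlebound}.

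Next I would sum over all $n$ regions. Because every edge of the partition lies on the boundary of exactly two regions, it is counted twice in $\sum_i P_i$; hence $\sum_i P_i = 2P$, where $P$ is the total perimeter of the partition. Combining this with the regional bounds yields
\[
2P = \sum_{i=1}^{n} P_i \geq n \cdot \frac{4\pi\sqrt{n-1}}{n} = 4\pi\sqrt{n-1},
\]
so that $P \geq 2\pi\sqrt{n-1}$.

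The remaining point, which I expect to be the only delicate one, is the strictness of the inequality. Equality throughout would force equality in Lemma \ref{thm: circlebound} for every region, i.e. each region would be a single circle (a spherical cap). I would rule this out for a genuine partition into $n \geq 3$ regions as follows: by Theorem \ref{thm: existence} the edges meet in threes at interior vertices, and the boundary of any region incident to such a vertex has a corner there, so it cannot be a single smooth circle. Since such triple points are unavoidable once $n \geq 3$, at least one region fails to achieve equality, and the bound is strict. As the application of interest is $n = 4$, this suffices.
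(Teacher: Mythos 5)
Your proof follows exactly the paper's argument: apply Bernstein's isoperimetric inequality (Lemma \ref{thm: circlebound}) to each of the $n$ regions of area $4\pi/n$, sum, and divide by two because each edge borders two regions. The only difference is that you also address strictness of the inequality (which the paper silently glosses over, and which in any case is irrelevant to the application at $n=4$, where the bound $2\pi\sqrt{3}>10.88$ is compared against $11.47$); your remark that equality fails for $n\geq 3$ is correct, though note the statement as literally written would fail for $n=2$, where a great circle attains $2\pi$ exactly.
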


\begin{proof}
Each region contains area $4\pi/n$. By Lemma \ref{thm: circlebound}
each region must have perimeter at least $B(4\pi/n)$. Multiply by
$n$ for the number of regions and divide by two (as each edge is
incident to at most two regions). Simplifying yields the desired
result.
\end{proof}

For $n = 4$, Corollary \ref{thm: partitionbound} yields that the
least-perimeter way to partition a sphere into four equal areas must
have perimeter at least $2\pi\sqrt{3} > 10.88$, whereas the
tetrahedral partition has perimeter $6\arccos(-1/3) < 11.47$ (given
by trigonometry). This yields an immediate upper bound on the size
of any one region.

\begin{numericalestimate}
\label{thm: numerical estimate}
In a perimeter-minimizing partition
of the sphere into four equal areas every region has perimeter less
than $6.62$.
\end{numericalestimate}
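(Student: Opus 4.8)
The plan is to combine a uniform lower bound on the perimeter of each individual region with an upper bound on the total perimeter of the partition, exploiting the fact that summing the four region perimeters counts each interface exactly twice.

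First I would record the two ingredients already in hand. Since a minimizer exists (Theorem \ref{thm: existence}) and the tetrahedral partition is an admissible competitor into four regions of equal area $\pi$, the total perimeter $L$ of the minimizer is bounded above by the tetrahedral perimeter, $L \le 6\arccos(-1/3) < 11.47$. In the other direction, each of the four regions has area $\pi$, so Lemma \ref{thm: circlebound} (applied to the region, possibly disconnected) gives that each region's boundary has length at least $B(\pi) = \sqrt{\pi(4\pi-\pi)} = \pi\sqrt{3} \approx 5.44$.

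Next comes the one genuine observation: in a partition of the sphere every edge lies on the common boundary of exactly two regions, so if $P_1,\dots,P_4$ denote the four region perimeters then $P_1+P_2+P_3+P_4 = 2L < 22.94$. Fixing a region $R_i$, I would write $P_i = 2L - \sum_{j\ne i}P_j$ and bound each of the three remaining terms below by $\pi\sqrt{3}$, obtaining $P_i < 22.94 - 3\pi\sqrt{3} \approx 6.60 < 6.62$, which is the claim. (This is essentially the averaging argument behind Corollary \ref{thm: partitionbound} run in reverse: there one multiplies the per-region lower bound by $n$ and halves; here one subtracts the lower bounds for the other regions from the global upper bound.)

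There is no substantial obstacle here, and I expect the proof to be a short computation. The only points demanding care are that the tetrahedral partition really is an admissible competitor (so that it bounds $L$ from above at all), and that the double-counting identity $\sum_i P_i = 2L$ is exact, which holds because no edge of a spherical partition is ``exterior.'' The stated constant $6.62$ is simply a convenient round value sitting just above $22.94 - 3\pi\sqrt{3}$, and the strictness of the inequality is comfortable given the slack between $3\pi\sqrt3 \approx 16.32$ and the doubled upper bound $22.94$.
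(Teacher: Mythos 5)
Your proposal is correct and is essentially the paper's own argument: the paper likewise writes $2P \ge x + 3B(\pi) = x + 3\pi\sqrt{3}$ for the total perimeter $P$, bounds $P$ above by the tetrahedral competitor's $6\arccos(-1/3) < 11.47$, and solves for $x < 6.62$. The only cosmetic difference is that the paper phrases the double counting as a lower bound on $P$ rather than solving for $P_i$ from $\sum_j P_j = 2P$.
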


\begin{proof}
Let $x$ be the perimeter of some region. By Lemma \ref{thm:
circlebound} the perimeter $P$ of the entire partition satisfies $P
> (1/2)(x + 3\pi\sqrt{3})$. Yet if the partition is minimizing then
we have $P < 11.47$. Numerics yield $x < 6.62$, the desired result.
\end{proof}

We will now prove and apply an inequality which will force any
region to have one large component (Proposition \ref{thm: numerical
estimates two}).

\begin{squarerootinequality}
\label{thm: square root inequality} The function (for fixed $0 < k
\leq 2\pi$) $$f_k(t) = \sqrt{t(4\pi-t)} + \sqrt{(k-t)(4\pi-k+t)}$$
defined on the interval $[0,k]$ is symmetric about the point $t =
k/2$, and $f_k'(t) > 0$ for all $0 < t < k/2$.
\end{squarerootinequality}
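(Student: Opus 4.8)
The plan is to rewrite $f_k$ in terms of the single-variable ``isoperimetric profile'' $\phi(s) = \sqrt{s(4\pi - s)}$, the very function appearing in Lemma \ref{thm: circlebound}. Observe that $\phi(k-t) = \sqrt{(k-t)\bigl(4\pi - (k-t)\bigr)} = \sqrt{(k-t)(4\pi - k + t)}$, so that $f_k(t) = \phi(t) + \phi(k-t)$ on $[0,k]$. The symmetry about $t = k/2$ is then immediate: replacing $t$ by $k-t$ merely swaps the two summands, giving $f_k(k-t) = \phi(k-t) + \phi(t) = f_k(t)$.

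For the monotonicity statement I would differentiate, applying the chain rule to the second summand, to obtain $f_k'(t) = \phi'(t) - \phi'(k-t)$. For $0 < t < k/2$ we have $0 < t < k - t < 2\pi$, so both arguments lie in the interior $(0,4\pi)$ and, crucially, $t$ is strictly smaller than $k-t$. It therefore suffices to show that $\phi'$ is strictly decreasing on $(0,4\pi)$, i.e. that $\phi$ is strictly concave there; this yields $\phi'(t) > \phi'(k-t)$ and hence $f_k'(t) > 0$.

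The heart of the argument, and the one genuine computation, is the sign of $\phi''$. Writing $\phi(s) = (4\pi s - s^2)^{1/2}$ and differentiating twice, I expect the second-derivative numerator to collapse: the bracketed quantity $(4\pi s - s^2) + (2\pi - s)^2$ simplifies to the constant $4\pi^2$, yielding $\phi''(s) = -4\pi^2 (4\pi s - s^2)^{-3/2}$, which is strictly negative on $(0,4\pi)$. This is the step to get right, since a sign error there sinks the whole lemma; once it is in hand the strict concavity, and with it the inequality $f_k'(t) > 0$, follows at once.

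As a fallback, should one prefer not to invoke concavity, the inequality $\phi'(t) > \phi'(k-t)$ reduces to $(2\pi - t)/\sqrt{t(4\pi - t)} > \bigl(2\pi - (k-t)\bigr)/\sqrt{(k-t)(4\pi - k + t)}$. Both sides are positive on the relevant range (note $t < k-t < 2\pi$), so one may clear denominators and square, reducing the claim to a polynomial inequality in $t$ and $k$ that should factor through the single condition $t < k - t$. I anticipate the concavity route to be substantially cleaner, so I would pursue it first.
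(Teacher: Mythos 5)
Your proposal is correct and rests on the same idea as the paper's proof: the concavity of $s \mapsto \sqrt{s(4\pi - s)}$ (the paper phrases this as ``the radicands are downward parabolas, so the sum of their square roots is concave'') combined with the symmetry about $t = k/2$. You simply make the concavity explicit by computing $\phi''(s) = -4\pi^2(4\pi s - s^2)^{-3/2}$ and conclude via $f_k'(t) = \phi'(t) - \phi'(k-t) > 0$ rather than by citing the general fact that a symmetric concave function increases up to its midpoint; the computation is correct and the argument is sound.
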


\begin{proof}
It is evident that the function is symmetric about $t =k/2$. The
radicands are downward parabolas (in $t$), so the sum of their
square roots is a concave down function. Symmetry implies the
desired result.
\end{proof}

\begin{numericalestimatetwo}
\label{thm: numerical estimates two} In a perimeter-minimizing
partition of the sphere into four equal areas, every region must
contain a component with area at least $23\pi/25$.
\end{numericalestimatetwo}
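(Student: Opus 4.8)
The plan is to argue by contradiction. Since the sphere has area $4\pi$, each of the four regions has area exactly $\pi$. Suppose some region $R$ had no component of area at least $23\pi/25$, so that its components have areas $a_1,a_2,\dots,a_N$ with $\sum_i a_i = \pi$ and every $a_i < 23\pi/25$. The perimeter of $R$ is the sum of the boundary lengths of its components, so Lemma \ref{thm: circlebound} applied componentwise gives that this perimeter is at least $\sum_i B(a_i)$, where $B(A)=\sqrt{A(4\pi-A)}$. My goal is to show that this sum must exceed $6.62$, contradicting Corollary \ref{thm: numerical estimate}.

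The crux is to identify the area distribution $(a_i)$ that minimizes $\sum_i B(a_i)$ subject to $\sum_i a_i = \pi$ and each $a_i \le 23\pi/25$. First I would note that $B$ is concave with $B(0)=0$, hence subadditive: $B(a)+B(b)\ge B(a+b)$. Thus merging two components into one never increases $\sum_i B(a_i)$. Whenever $N\ge 3$, the two smallest components sum to at most $2\pi/N \le 2\pi/3 < 23\pi/25$, so they may be merged without violating the cap; iterating drives the configuration down to exactly two components (a single component is impossible, since $\pi > 23\pi/25$). For two components of areas $t$ and $\pi-t$, Lemma \ref{thm: square root inequality} with $k=\pi$ shows that $f_\pi(t)=B(t)+B(\pi-t)$ is symmetric about $t=\pi/2$ and increasing on $(0,\pi/2)$, so on the feasible range $t\in[2\pi/25,\,23\pi/25]$ it is minimized at the endpoints. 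This yields the lower bound
\[
\sum_i B(a_i) \ \ge\ B\!\left(\tfrac{2\pi}{25}\right) + B\!\left(\tfrac{23\pi}{25}\right) \ =\ \frac{\pi\left(14+\sqrt{1771}\right)}{25} \ >\ 7.04 .
\]

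Since $7.04 > 6.62$, the perimeter of $R$ would exceed the bound of Corollary \ref{thm: numerical estimate}, a contradiction; hence every region contains a component of area at least $23\pi/25$. I expect the main obstacle to be exactly the fact that a region may have arbitrarily many components, which rules out a direct two-piece comparison. The subadditivity-and-merge reduction is what tames this: it reduces the worst case among all admissible splittings to the single two-piece split $(2\pi/25,\,23\pi/25)$, at which point Lemma \ref{thm: square root inequality} finishes the estimate. A secondary point to check carefully is that the strict inequality $a_i < 23\pi/25$ does not weaken the bound, since the minimum over the closed constraint $a_i \le 23\pi/25$ already equals the value above, so every admissible configuration has perimeter at least this much.
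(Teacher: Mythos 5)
Your proof is correct, and it rests on the same pillars as the paper's: the componentwise isoperimetric bound of Lemma \ref{thm: circlebound}, the perimeter cap of Corollary \ref{thm: numerical estimate}, the symmetry and monotonicity of $f_\pi$ from Lemma \ref{thm: square root inequality}, and the same numerical evaluation $B(2\pi/25)+B(23\pi/25)=\tfrac{\pi}{25}\left(14+\sqrt{23\cdot 77}\right)\approx 7.05>6.62$. The one place you genuinely diverge is in how the ``many small components'' scenario is handled. The paper compares the largest component (area $t$) against the union of all the others, obtaining $P\ge B(t)+B(\pi-t)=f_\pi(t)$, concludes that $t<2\pi/25$ or $t>23\pi/25$, and then eliminates the first alternative with a separate linearization argument: by concavity $B(x)\ge 7x$ for $x\le 2\pi/25$, so a region made entirely of small components has perimeter at least $7\pi$. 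You instead run a merging argument: subadditivity of the concave function $B$ lets you combine the two smallest components without increasing $\sum_i B(a_i)$, and your observation that the two smallest of $N\ge 3$ areas summing to $\pi$ total at most $2\pi/N\le 2\pi/3<23\pi/25$ guarantees the cap is never violated, so you land in the two-piece configuration with $t\in[2\pi/25,\,23\pi/25]$ automatically and never need the paper's second case. Your route is slightly more economical (one case instead of two, with Lemma \ref{thm: square root inequality} invoked exactly once at the endpoints of the feasible interval); the paper's route avoids the induction on the number of components at the cost of the extra linear estimate. Both arguments are complete.
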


\begin{proof}
Let $t$ be the area of the largest component in the given region. By
Lemma \ref{thm: circlebound} we have the inequality $P(t) \geq B(t)
+ B(\pi-t),$ where $P$ is the perimeter of the region. Corollary
\ref{thm: numerical estimate} yields $6.62 > B(t) + B(\pi-t)$. On
the other hand, setting $t = 23\pi/25$ gives $$P(\frac{23\pi}{25})
\geq B(\frac{23\pi}{25}) + B(\frac{2\pi}{25}) =
\frac{\pi}{25}(\sqrt{23\cdot77} + 14) \approx 7 > 6.62.$$ So Lemma
\ref{thm: square root inequality} says $t > 23\pi/25$ or $t <
2\pi/25$.

Suppose $t < 2\pi/25$. By Lemma \ref{thm: circlebound} when $A =
2\pi/25$, the region must have perimeter greater than $14\pi/25 =
7A$. As $B(x)$ is concave down we have $B(x) \geq 7x$ for $x <
2\pi/25$. Therefore the perimeter of the region is greater than 7
times the area of the region. So $t < 2\pi/25$ implies that the
perimeter of the region is at least $7\pi \approx 21.99 > 6.62$, a
clear contradiction of Corollary \ref{thm: numerical estimate}.
\end{proof}

The following Lemma \ref{thm: conor quinn theorem} due to Quinn
\cite{quinn} will produce a large triangle in $R_1$ (Corollary
\ref{thm: maxs elimination}).

\begin{quinnsgaussbonnet}[\cite{quinn}, Lemma 5.12]
\label{thm: conor quinn theorem} In the highest-pressure region of a
perimeter-minimizing partition, (1) a triangle must have area less
than or equal to $\pi$, (2) a square must have area less than or
equal to $2\pi/3$, (3) a pentagon must have area less than or equal
to $\pi/3$, and (4) all other polygons cannot exist. Equality can
only occur when the polygon is geodesic.
\end{quinnsgaussbonnet}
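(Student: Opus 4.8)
The plan is to invoke the Gauss--Bonnet theorem, which is the one nonelementary ingredient the introduction reserves for itself. I would fix a single $m$-gon component $\Omega$ of the highest-pressure region $R_1$, take it to be a topological disk (so its Euler characteristic is $1$), and write Gauss--Bonnet on the unit sphere, where the Gaussian curvature is identically $1$:
$$\int_\Omega 1\, dA + \int_{\partial\Omega} k_g\, ds + \sum_{i=1}^m \varepsilon_i = 2\pi,$$
so that $A = 2\pi - \sum_i \varepsilon_i - \int_{\partial\Omega} k_g\, ds$, where $A$ is the area of $\Omega$, the $\varepsilon_i$ are the exterior (turning) angles, and $k_g$ is the geodesic curvature of the boundary taken with $\Omega$ on the left.

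Next I would evaluate the two boundary contributions using the structure from Theorems \ref{thm: existence} and \ref{thm: stability}. At each of the $m$ vertices three edges meet at $120$ degrees, and $\Omega$ occupies exactly one of the three sectors, so each interior angle equals $2\pi/3$ and each exterior angle is $\varepsilon_i = \pi - 2\pi/3 = \pi/3$; hence $\sum_i \varepsilon_i = m\pi/3$. For the curvature term, every edge of $\Omega$ lies between $R_1$ and some lower-pressure region $R_j$, and by Definition \ref{def: one} such an edge has signed curvature $\kappa_{1j} \geq 0$. With the orientation convention in which $R_1$ is on the left, the highest-pressure region is the one toward which each edge is \emph{convex}, so $k_g = \kappa_{1j} \geq 0$ along every edge and therefore $\int_{\partial\Omega} k_g\, ds \geq 0$.

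Combining these gives the single inequality
$$A \;=\; 2\pi - \frac{m\pi}{3} - \int_{\partial\Omega} k_g\, ds \;\leq\; 2\pi - \frac{m\pi}{3},$$
from which the whole statement follows by arithmetic: $m=3$ yields $A \le \pi$, $m=4$ yields $A \le 2\pi/3$, and $m=5$ yields $A \le \pi/3$, while $m \ge 6$ forces $A \le 0$, which is impossible for an honest region, ruling out all larger polygons. Since $\int_{\partial\Omega} k_g\, ds$ is a sum of the nonnegative quantities $\kappa_{1j}$ weighted by positive edge lengths, equality in any of these bounds forces every $\kappa_{1j} = 0$, i.e. every edge of $\Omega$ is a geodesic.

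The step I expect to be the real subtlety is pinning down the sign conventions so that ``highest pressure'' translates cleanly into $\int k_g \ge 0$; getting this backwards would flip every inequality and produce lower bounds instead of the upper bounds we want. A secondary point requiring care is the claim that each interior angle is exactly $2\pi/3$: one must confirm that a component of $R_1$ cannot create a reflex vertex (an angle of $4\pi/3$) by meeting itself at a triple point, which would change the angle count, so I would note that the regularity of Theorem \ref{thm: existence} together with the disk topology of the polygon rules this out.
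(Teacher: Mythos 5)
Your proposal is correct and is exactly the argument the paper intends: its proof is the one-liner ``follows directly from Gauss--Bonnet and the convexity of the components of $R_1$,'' and you have simply expanded that into the explicit computation $A \leq 2\pi - m\pi/3$ with the correct sign convention ($k_g = \kappa_{1j} \geq 0$, matching the paper's own use of Gauss--Bonnet in Lemma \ref{thm: r two curvature bound}) and the correct equality analysis.
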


\begin{proof}
The result follows directly from Gauss-Bonnet and the convexity of
the components of $R_1$.
\end{proof}

\begin{maxselimination}
\label{thm: maxs elimination} In a perimeter-minimizing partition of
the sphere into four equal areas, $R_1$ must contain a triangle, and
this triangle must have area at least $23\pi/25$.
\end{maxselimination}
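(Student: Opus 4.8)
The plan is to realize the required triangle as the large component supplied by Proposition~\ref{thm: numerical estimates two}, and then to whittle down its possible shapes using Quinn's Gauss--Bonnet bound (Lemma~\ref{thm: conor quinn theorem}) together with the area threshold $23\pi/25$ and two combinatorial exclusions.

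First I would apply Proposition~\ref{thm: numerical estimates two} to obtain a component $C$ of $R_1$ with area at least $23\pi/25$. Because $C$ lies in the highest-pressure region, Lemma~\ref{thm: conor quinn theorem} restricts it directly: among polygons with three or more sides, only triangles (area $\leq \pi$), squares (area $\leq 2\pi/3$), and pentagons (area $\leq \pi/3$) can occur in $R_1$, everything with six or more sides being excluded outright.

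Next I would eliminate all shapes but the triangle. Since $23\pi/25 = 0.92\pi$ exceeds both $2\pi/3 \approx 0.67\pi$ and $\pi/3 \approx 0.33\pi$, the area bounds of Lemma~\ref{thm: conor quinn theorem} show that $C$ can be neither a square nor a pentagon. The two remaining low-sided shapes are handled separately: a digon is forbidden by Corollary~\ref{thm: digons}, and a one-sided component (a single closed curve) is forbidden by Lemma~\ref{thm: oddcycle}, since such a component borders only one other region whereas an odd-sided component must meet a component of every other region. Hence $C$ is a triangle, and by its defining property its area is at least $23\pi/25$ (and at most $\pi$ by Lemma~\ref{thm: conor quinn theorem}).

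I expect the only genuine subtlety to be the exclusion of the digon. The Gauss--Bonnet computation behind Lemma~\ref{thm: conor quinn theorem} does not by itself rule out a convex digon of large area, so the argument must lean on the separate combinatorial inputs of Corollary~\ref{thm: digons} and Lemma~\ref{thm: oddcycle}; everything else reduces to the numerical comparison of $23\pi/25$ against the thresholds $2\pi/3$ and $\pi/3$.
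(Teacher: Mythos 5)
Your proposal is correct and follows exactly the paper's route: the paper's proof is the one-line observation that the result is immediate from Proposition~\ref{thm: numerical estimates two} (a component of area at least $23\pi/25$) and Lemma~\ref{thm: conor quinn theorem} (area caps of $\pi$, $2\pi/3$, $\pi/3$ for triangles, squares, pentagons, with all other polygons excluded). Your extra care about digons and one-sided components is a reasonable filling-in of what the paper's Lemma~\ref{thm: conor quinn theorem} sweeps under ``all other polygons cannot exist,'' but it does not change the argument.
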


\begin{proof}
The result is immediate from Lemma \ref{thm: conor quinn theorem}
and Proposition \ref{thm: numerical estimates two}.
\end{proof}

\section{$R_2$ and $R_3$ contain large triangles}\label{sec: r2 and r3 triangles}

The goals of this section are Propositions \ref{thm: r two
triangles} and \ref{thm: r three triangles}: that $R_2$ and $R_3$
each contains a triangle with area at least $23\pi/25$. The
possibility that the components of $R_2$ or $R_3$ are not convex
prohibits us from using Lemma \ref{thm: conor quinn theorem} and
necessitates a closer look at the curvature of the interfaces. We
start off by bounding $\kappa_{12}$.

\begin{firstcurvaturebound}
\label{thm: r two curvature bound}
In a least-perimeter partition of the sphere into four equal areas, $\kappa_{12} < 1/21$.
\end{firstcurvaturebound}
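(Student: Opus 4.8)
The plan is to bound $\kappa_{12}$ by exploiting the tension between two facts: region $R_1$ contains a large triangle (Corollary \ref{thm: maxs elimination}), and the total perimeter of any minimizer is small (Corollary \ref{thm: numerical estimate}). The key observation is that $\kappa_{12}$ is the (signed) curvature of \emph{every} edge shared between $R_1$ and $R_2$. The large triangle $T$ in $R_1$ has area at least $23\pi/25$, hence area very close to $\pi$ (a geodesic triangle in $R_1$). Its three edges are interfaces with the other three regions, and since $R_1$ has highest pressure each such edge bulges \emph{outward} from $R_1$ with curvature $\kappa_{1j} \geq 0$. I would first establish that at least one of the three edges of $T$ is shared with $R_2$ (this follows since a triangle is incident to exactly one component of every other region, by Lemma \ref{thm: oddcycle} and the remark following it).

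The heart of the argument is a quantitative Gauss--Bonnet computation on the triangle $T$. By Gauss--Bonnet on the unit sphere, $\operatorname{Area}(T) = \pi + \sum(\text{exterior angle corrections}) - \sum_{\text{edges}} \int \kappa_g\, ds$, where the geodesic curvature of each edge is the constant $\kappa_{1j}$ of that interface. Since all three interior angles at the vertices of $T$ are $2\pi/3$ (from the $120^\circ$ meeting condition of Theorem \ref{thm: existence}), the angle contribution is fixed, and the area formula reduces to a clean linear relation of the form $\operatorname{Area}(T) = \pi - \sum_{i=1}^{3} \kappa_{1j_i}\, \ell_i$, where $\ell_i$ are the edge lengths and $j_i \in \{2,3,4\}$. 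Because $\operatorname{Area}(T) \geq 23\pi/25$, we get $\sum \kappa_{1j_i}\ell_i \leq 2\pi/25$, and since every term is nonnegative, each individual term, in particular the one corresponding to the $R_2$-interface, satisfies $\kappa_{12}\,\ell \leq 2\pi/25$.

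It then remains to produce a \emph{lower} bound on the length $\ell$ of the $R_1$--$R_2$ interface edge of $T$, so that dividing gives $\kappa_{12} < 1/21$. This is where the main obstacle lies. I expect to bound $\ell$ from below by arguing that since $T$ has area close to $\pi$ and its angles are all $2\pi/3$, it must be close to the geodesic triangle of area $\pi$, whose sides each have a definite positive length; a curvature/area continuity estimate controls how much an edge can shrink while the total area stays above $23\pi/25$. Concretely, I would bound the edge lengths below using the isoperimetric-type estimate (Lemma \ref{thm: circlebound}) applied to $T$ together with the angle constraints, or directly via spherical trigonometry for the nearly-geodesic triangle, to show each $\ell_i$ exceeds roughly $\arccos(-1/3) \approx 1.91$ up to a small correction. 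The arithmetic is then routine: $\kappa_{12} \leq (2\pi/25)/\ell < (2\pi/25)/1.9 < 1/21$. The delicate point is making the lower bound on $\ell$ rigorous and uniform over all admissible shapes of $T$, rather than just the exactly geodesic case, since $T$ need not be geodesic unless its area is exactly $\pi$.
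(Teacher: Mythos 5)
Your setup is the same as the paper's: apply Gauss--Bonnet to the large triangle $T$ in $R_1$, use that the exterior angles sum to $\pi$ and the area is at least $23\pi/25$ to get $\sum_i \kappa_{1j_i}\ell_i \leq 2\pi/25$. But the final step has a genuine gap, and in fact the arithmetic you propose is false. If you keep only the $R_2$-edge term you get $\kappa_{12}\,\ell \leq 2\pi/25$ with $\ell$ the length of a \emph{single} edge, and to conclude $\kappa_{12} < 1/21$ you would need $\ell \geq 42\pi/25 \approx 5.28$. A lower bound of $\ell \approx 1.9$ (the tetrahedral edge length $\arccos(-1/3)$) gives only $\kappa_{12} < (2\pi/25)/1.9 \approx 0.13$, which is nowhere near $1/21 \approx 0.048$; your claimed inequality $(2\pi/25)/1.9 < 1/21$ does not hold. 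No lower bound on one edge of a spherical triangle can reach $5.28$, so this route cannot be repaired by sharpening the length estimate. (Your instinct that the single-edge lower bound is the ``delicate point'' is right, but the real problem is that even the best possible such bound is too weak by roughly a factor of $3$.)

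The fix, which is what the paper does, is to not discard the other two terms of the sum. Since $R_2$ has the highest pressure among $R_2, R_3, R_4$, Definition \ref{def: one} gives $\kappa_{12} \leq \kappa_{13} \leq \kappa_{14}$, hence
$$\sum_i \kappa_{1j_i}\ell_i \;\geq\; \kappa_{12}\sum_i \ell_i \;=\; \kappa_{12}P,$$
where $P$ is the \emph{entire} perimeter of $T$. Then $\kappa_{12}P \leq 2\pi/25$, and the isoperimetric inequality (Lemma \ref{thm: circlebound}) gives $P \geq B(23\pi/25) = (\pi/25)\sqrt{23\cdot 77} \approx 5.29$ with no per-edge analysis needed. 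Dividing yields $\kappa_{12} \leq 2/\sqrt{23\cdot 77} < 1/21$. Note also that the paper never needs a lower bound on individual edge lengths anywhere; the later Corollaries \ref{thm: not too long} and \ref{thm: not too long 3} are \emph{upper} bounds $l \leq P/3$ on single edges, which are much easier to obtain.
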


\begin{proof}
By Corollary \ref{thm: maxs elimination}, $R_1$ has a triangle, $T$,
of area $A_T \geq 23\pi/25$. By Gauss-Bonnet, the perimeter $P$ and
exterior angles $\alpha_i$, of this triangle satisfy $$2\pi = A_T +
\int_{\partial T}\kappa ds + \sum \alpha_i \geq \frac{23\pi}{25} +
P\kappa_{12} + \pi,$$  so $P\kappa_{12}\leq 2\pi/25.$ By Lemma
\ref{thm: circlebound}, $P \geq B(23\pi/25)$. Therefore $\kappa_{12}
< 1/21$.
\end{proof}

Now we are able to establish an analogue to Corollary \ref{thm: maxs elimination} for $R_2$.

\begin{rtwotriangles}
\label{thm: r two triangles} In a least-perimeter partition of the
sphere into four equal areas, $R_2$ contains a triangle of area at
least $23\pi/25$.
\end{rtwotriangles}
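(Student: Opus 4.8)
The plan is to apply Gauss--Bonnet to the large component of $R_2$ guaranteed by Proposition \ref{thm: numerical estimates two}, using the smallness of $\kappa_{12}$ from Lemma \ref{thm: r two curvature bound} to control the one term that works against us. Let $C$ be a component of $R_2$ with area $A_C \geq 23\pi/25$, and let $m$ be its number of edges. Because $R_2$ need not be convex, the edges of $C$ split into two types: edges shared with the higher-pressure region $R_1$, which curve \emph{into} $C$ and contribute $-\kappa_{12}$ to the geodesic curvature, and edges shared with the lower-pressure regions $R_3$ and $R_4$, which curve away from $C$ and contribute the nonnegative quantities $\kappa_{23}$ and $\kappa_{24}$. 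Writing $P_1$ for the total length of the edges $C$ shares with $R_1$, the boundary integral therefore satisfies $\int_{\partial C}\kappa_g\,ds \geq -\kappa_{12}P_1$, and $\kappa_{12}P_1 < (1/21)(6.62) < 1/3$ by Lemma \ref{thm: r two curvature bound} and Corollary \ref{thm: numerical estimate}.

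First I would rule out the possibility that $C$ is not simply connected. If $C$ had $b \geq 2$ boundary circles then, since $C$ is a genus-zero subsurface of the sphere, Gauss--Bonnet reads $A_C + \int_{\partial C}\kappa_g\,ds + \sum_i \alpha_i = 2\pi(2-b) \leq 0$. As the exterior angles $\alpha_i$ are nonnegative, this forces $A_C \leq \kappa_{12}P_1 < 1/3$, contradicting $A_C \geq 23\pi/25$. Hence $C$ is a topological disk, and since one region occupies a single $120^\circ$ sector at each triple junction, every vertex contributes exterior angle $\pi/3$, so $\sum_i \alpha_i = m\pi/3$.

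With $C$ a disk, Gauss--Bonnet becomes $2\pi = A_C + \int_{\partial C}\kappa_g\,ds + m\pi/3$. Combining the lower bound on the boundary integral with $A_C \geq 23\pi/25$ gives $m\pi/3 \leq 2\pi - 23\pi/25 + \kappa_{12}P_1 < 27\pi/25 + 1/3$, whence $m < 3.6$ and so $m \leq 3$. On the other hand $C$ cannot be a monogon (a single circle is incident to only one other region, contradicting Lemma \ref{thm: oddcycle}) nor a digon (Corollary \ref{thm: digons}), so $m \geq 3$. Therefore $m = 3$: the large component $C$ is a triangle of area at least $23\pi/25$, as claimed.

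The main obstacle is precisely the inward-curving $R_1$-edges, which are exactly what could, a priori, allow $C$ to have arbitrarily many sides and thereby evade being a triangle; the whole argument hinges on Lemma \ref{thm: r two curvature bound} making $\kappa_{12}P_1$ small enough (below $1/3$) that it cannot outweigh the $23\pi/25$ of enclosed area in either the simple-connectivity step or the edge count. The remaining work is bookkeeping: confirming the exterior angle at every vertex is genuinely $\pi/3$ and verifying the final numerical inequality $m < 3.6$.
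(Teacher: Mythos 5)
Your argument is correct and takes essentially the same route as the paper: apply Gauss--Bonnet to the large component of $R_2$ guaranteed by Proposition \ref{thm: numerical estimates two}, use Lemma \ref{thm: r two curvature bound} to control the inward-curving $R_1$-edges, and conclude the component must be a triangle. The only differences are presentational --- you explicitly rule out multiply connected components (which the paper leaves implicit) and bound the number of sides $m$ directly, whereas the paper assumes $m \geq 4$ and derives an impossible interface length $P_{12} > 12$; the two contradictions are arithmetically equivalent.
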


\begin{proof}
Proposition \ref{thm: numerical estimates two} says that $R_2$ must
have a component with area no less than $23\pi/25$. Assume by way of
contradiction that this component has at least four sides. Then
Gauss-Bonnet gives $$\frac{23\pi}{25} - \kappa_{12}P_{12} +
\kappa_{r}P_r + \frac{4\pi}{3} \leq 2\pi,$$ where $P_{12}$ is the
perimeter between $R_1$ and $R_2$. $\kappa_rP_r$ represents the
(positive) contribution of curvature from lower pressure regions.
Combining terms we get: $\pi(23/25 -2/3) \leq \kappa_{12}P_{12}$.
Bounding $\kappa_{12}$ using Lemma \ref{thm: r two curvature bound}
yields $19\pi/75 \leq P_{12}/21.$ Isolating $P_{12}$ gives $P_{12}
\geq 133\pi/25 > 12$, obviously contradicting Corollary \ref{thm:
numerical estimate}.
\end{proof}

In order to establish an analogue to Lemma \ref{thm: r two curvature
bound} for $R_3$, we must insure that $R_2$ does not occupy too much
of the perimeter of $R_1$'s large triangle. A quick corollary bounds
the length of the side of $R_1$'s large triangle which is incident
to $R_2$ (a side we know exists by Lemma \ref{thm: oddcycle}).

\begin{nottoolong}
\label{thm: not too long} In a least-perimeter partition of the
sphere into four equal areas, let $P$ be the perimeter of the large
triangle in $R_1$, and let $l$ be the length of the side incident to
$R_2$ in that triangle. Then $l \leq P/3$.
\end{nottoolong}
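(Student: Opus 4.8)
The plan is to show that the side of the large triangle $T\subseteq R_1$ incident to $R_2$ is in fact the shortest of the three sides of $T$. Since the minimum of three positive lengths is at most their average, this yields $l\le(l_{12}+l_{13}+l_{14})/3=P/3$ immediately, where I write $l_{1j}$ for the side between $R_1$ and $R_j$.

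First I would record the combinatorial and geometric data of $T$. By Lemma \ref{thm: oddcycle} the triangle $T$ is adjacent to exactly one component of each of $R_2$, $R_3$, and $R_4$, so its three sides carry the curvatures $\kappa_{12}$, $\kappa_{13}$, $\kappa_{14}$, one apiece. Because $R_1$ has the highest pressure and the pressures decrease from $R_2$ to $R_4$, Definition \ref{def: one} gives $\kappa_{12}\le\kappa_{13}\le\kappa_{14}$, so the side incident to $R_2$ is precisely the side of \emph{least} geodesic curvature. Moreover, since three edges meet at $120$ degrees at each vertex (Theorem \ref{thm: existence}) and $T$ occupies one such sector, every interior angle of $T$ equals $2\pi/3$; and, exactly as in Lemma \ref{thm: conor quinn theorem}, $T$ is convex with its sides bulging outward. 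By Gauss--Bonnet these data force \[ \kappa_{12}l_{12}+\kappa_{13}l_{13}+\kappa_{14}l_{14}=\pi-A_T\le\frac{2\pi}{25}, \] so, in view of Lemma \ref{thm: r two curvature bound}, all three curvatures are small and $T$ is a near-equilateral, near-tetrahedral triangle.

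The heart of the matter is the claim that, among convex spherical triangles with all interior angles equal to $2\pi/3$ and outward-bulging circular-arc sides, the side of least curvature is the shortest. The intended mechanism is to pass to the geodesic ``chord triangle'' $\widehat T$ spanned by the three vertices of $T$. At each vertex the angle $2\pi/3$ of $T$ exceeds the corresponding angle of $\widehat T$ by the sum of the two tangent--chord (bulge) angles of the sides meeting there; hence the angle of $\widehat T$ opposite a given side is $2\pi/3$ minus the bulge angles of the \emph{other two} sides. The least-curvature side (to $R_2$) is opposite the vertex where the two most-curved sides meet, and so is opposite the smallest angle of $\widehat T$; since on the sphere the larger side lies opposite the larger angle, its chord is the shortest of the three. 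As the arc-over-chord excess also increases with curvature, the least-curvature side is the shortest \emph{arc} as well, and the claim follows.

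The main obstacle is that the bulge angle of a side depends not only on its curvature but also on its a priori unknown length, so the ordering of the bulge angles needed above does not follow formally from $\kappa_{12}\le\kappa_{13}\le\kappa_{14}$. Making the comparison rigorous therefore requires a genuine monotonicity statement: that raising one side's curvature while holding all three angles at $2\pi/3$ lengthens that side (and shortens the others). I expect to settle this by a perturbation from the symmetric tetrahedral configuration $\kappa_{12}=\kappa_{13}=\kappa_{14}$, where a direct spherical-trigonometric computation shows the relevant derivative has the correct sign; the smallness of all three curvatures established above confines us to exactly the regime where this perturbative argument controls the ordering.
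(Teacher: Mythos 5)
Your approach is genuinely different from the paper's, and it has a gap at its central step. The claim that among the three sides of $T$ the one of least curvature is the shortest is exactly what your argument needs, and you correctly identify that it does not follow formally from $\kappa_{12}\le\kappa_{13}\le\kappa_{14}$ because the tangent--chord (bulge) angles depend on the unknown side lengths as well as on the curvatures. The proposed repair --- a perturbation from the symmetric configuration $\kappa_{12}=\kappa_{13}=\kappa_{14}$ --- does not close the gap, because the premise that ``all three curvatures are small'' is unjustified at this point and is in fact false for the configurations you must rule out: Lemma \ref{thm: r two curvature bound} only bounds $\kappa_{12}$, the bound $\kappa_{13}<1/14$ is proved \emph{after} (and using) this corollary, and Lemma \ref{thm: r four curvature bound} later shows that in a non-tetrahedral minimizer $\kappa_{14}>1/2$. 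The Gauss--Bonnet identity $\sum\kappa_{1j}l_{1j}\le 2\pi/25$ is perfectly consistent with one curvature being large and its side short, so the triangle need not be near-equilateral and a local perturbative computation at the symmetric point controls nothing.

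The paper avoids all of this with a short global perimeter count. Suppose the partition is not tetrahedral and $l>P/3$. By Corollary \ref{thm: digons} no two triangles share an edge, so the component of $R_2$ meeting $T$ along the side of length $l$ is not a triangle and in particular is not the large triangle of $R_2$ produced by Proposition \ref{thm: r two triangles}. Hence the perimeter of $R_2$ is at least $P_2+l$, where $P_2\ge B(23\pi/25)$ and $l>P/3\ge B(23\pi/25)/3$, giving a total exceeding $(4/3)B(23\pi/25)>7$, which contradicts Corollary \ref{thm: numerical estimate}. Note also that this is a proof by contradiction using the non-tetrahedral hypothesis and the adjacency structure of $R_2$, neither of which your argument invokes; if your monotonicity lemma could be established it would prove the stronger statement that $l$ is the minimal side, but as written the key lemma is missing.
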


\begin{proof}
If the partition in question is tetrahedral, then the statement is
trivial. Assume that it is not tetrahedral and, to obtain a
contradiction, that $l > P/3$. By Corollary \ref{thm: digons} no two
triangles are incident to one another in a non-tetrahedral
minimizing partition. Therefore the perimeter of $R_2$ is at least
$P_2 + l$, where $P_2$ is the perimeter of the large triangle in
$R_2$ (whose existence is established in Proposition \ref{thm: r two
triangles}). By Lemma \ref{thm: circlebound} we have that the
perimeter of $R_2$ is at least $(4/3)B(23\pi/25) > 7$, which
contradicts the partition's minimality by Corollary \ref{thm:
numerical estimate}.
\end{proof}

Now we proceed as in the $R_2$ case; first we bound $\kappa_{13}$.

\begin{secondcurvaturebound}
\label{thm: second curvature bound}
In a least-perimeter partition of the sphere into four equal areas, $\kappa_{13} < 1/14$.
\end{secondcurvaturebound}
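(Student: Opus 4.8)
The plan is to mimic the argument used in Lemma \ref{thm: r two curvature bound}, which bounded $\kappa_{12}$, but now applied to the triangle in $R_2$ rather than in $R_1$. By Proposition \ref{thm: r two triangles} we know $R_2$ contains a triangle $T$ of area $A_T \geq 23\pi/25$. The strategy is to apply Gauss-Bonnet to this triangle, isolate the curvature contribution coming from the interface with $R_3$ (whose signed curvature is exactly $\kappa_{23}$), and then convert that bound on a curvature-times-length product into a bound on $\kappa_{13}$ using the pressure relation $\kappa_{13} = \kappa_{12} + \kappa_{23}$ together with the already-established bound $\kappa_{12} < 1/21$.

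The key steps, in order, are as follows. First, I would write down Gauss-Bonnet for the triangle $T$ in $R_2$: the total area plus the integrated geodesic curvature plus the sum of exterior angles equals $2\pi$. Since $T$ is a triangle the exterior angles again contribute $\pi$ (the angles meet at $120$ degrees by Theorem \ref{thm: existence}). The edges of $T$ border $R_1$, $R_3$, and $R_4$ in some configuration; by Lemma \ref{thm: oddcycle} a triangle is incident to exactly one component from each other region, so $T$ has one side with $R_1$ (contributing curvature $-\kappa_{12}$, since $R_1$ has higher pressure), one side with $R_3$ (contributing $+\kappa_{23}$), and one side with $R_4$ (contributing $+\kappa_{24}$). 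Second, I would drop the nonnegative $\kappa_{24}$ term and the nonnegative area slack to extract an inequality of the form $\kappa_{23} P_{23} \leq 2\pi/25 + \kappa_{12} P_{12}$, where $P_{23}$ and $P_{12}$ are the relevant edge lengths. Third, I would bound the edge lengths: each side of $T$ has length at most $P/3$ by the analogue of Corollary \ref{thm: not too long} (or directly, since the three sides of a triangle in a minimizer are comparable), and the total perimeter $P$ of $T$ is bounded above by Corollary \ref{thm: numerical estimate} and below by $B(23\pi/25)$ via Lemma \ref{thm: circlebound}. Finally, combining $\kappa_{23}$'s bound with $\kappa_{12} < 1/21$ through $\kappa_{13} = \kappa_{12} + \kappa_{23}$ should yield $\kappa_{13} < 1/14$.

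The main obstacle I anticipate is the bookkeeping on the edge lengths and the curvature signs in the Gauss-Bonnet relation. Unlike the $R_1$ triangle, whose sides all border strictly lower-pressure regions and hence contribute curvature of a single sign, the $R_2$ triangle borders the higher-pressure $R_1$ on one side, so that edge contributes negatively. The delicate point is that to get a clean upper bound on $\kappa_{23}$ I must lower-bound the length $P_{23}$ of the $R_2$--$R_3$ interface while upper-bounding the length $P_{12}$ of the $R_2$--$R_1$ interface; if these lengths are wildly unequal the estimate could degrade. I expect that Corollary \ref{thm: not too long} is precisely what controls this, ensuring the $R_1$-incident side of the triangle is not too long, so that the negative curvature contribution cannot overwhelm the estimate. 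Getting the arithmetic to close at the stated threshold $1/14$ rather than something weaker is the quantitatively sensitive part, and I would verify the numerics carefully at the end.
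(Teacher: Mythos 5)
Your overall plan---Gauss--Bonnet on a large triangle plus the curvature bound already in hand---is the right family of ideas, but the specific route through $R_2$'s triangle and the decomposition $\kappa_{13}=\kappa_{12}+\kappa_{23}$ cannot close numerically, and this is a genuine gap rather than a matter of careful bookkeeping. The target $1/14$ leaves you needing $\kappa_{23}<1/14-1/21=1/42\approx 0.024$, since $\kappa_{12}$ may be as large as (just under) $1/21$, which is already two thirds of the budget. But Gauss--Bonnet on a triangle of area at least $23\pi/25$ only yields a slack of $2\pi/25$ to distribute over the curvature terms, and in $R_2$'s triangle the $R_1$-incident side contributes \emph{negatively}, so even in the most favorable case (that side having length zero, the entire perimeter $P_2\geq B(23\pi/25)=(\pi/25)\sqrt{23\cdot 77}$ carrying curvature at least $\kappa_{23}$) you get only $\kappa_{23}\leq (2\pi/25)/B(23\pi/25)=2/\sqrt{1771}\approx 0.048$, roughly twice what you need; realistically, after paying for the $-\kappa_{12}\ell_1$ term you land near $\kappa_{13}\lesssim 0.10$--$0.15$, far above $1/14\approx 0.0714$. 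A secondary problem: your claim that ``each side of $T$ has length at most $P/3$'' cannot hold as stated (the three sides sum to $P$), and the not-too-long corollaries are proved only for specific sides of $R_1$'s triangle by exploiting that the adjacent region carries its own disjoint large triangle; there is no such result for an arbitrary side of $R_2$'s triangle.

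The paper instead applies Gauss--Bonnet directly to $R_1$'s large triangle, where all three sides contribute curvature with the same (positive) sign and both the $R_3$- and $R_4$-incident sides carry curvature at least $\kappa_{13}$ (since $\kappa_{14}\geq\kappa_{13}$). Corollary \ref{thm: not too long} bounds the one remaining side---the $R_2$-incident one---by $P/3$, so the other two sides have total length $P'\geq (2/3)P\geq (2/3)B(23\pi/25)$, and the full slack $2\pi/25$ bounds $\kappa_{13}P'$ with no loss from a $\kappa_{12}$ term. This gives $\kappa_{13}\leq 3/\sqrt{23\cdot 77}\approx 0.0713<1/14$, which is extremely tight; any additive loss of order $1/21$ is fatal, which is exactly why your decomposition cannot be repaired within this framework.
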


\begin{proof}
By Corollary \ref{thm: maxs elimination} $R_1$ has a large triangle.
Let $P$ be the perimeter of this triangle and $P'$ the lengths of
the side of the triangle which are not incident to $R_2$. Then
Corollary \ref{thm: not too long} states that  $P \leq 3P'/2$.
Using Lemma \ref{thm: circlebound} to obtain a lower bound for $P$
we write $P' \geq (2/3) B(23\pi/25).$ Applying Gauss-Bonnet to this
large triangle gives the inequality $23\pi/25 + \kappa_{13}P' \leq
\pi.$ Substituting the bound on $P'$ and simplifying results in the
desired inequality $\kappa_{13} \leq 3/\sqrt{23\cdot77} < 1/14$.
\end{proof}

In the same vein as Proposition \ref{thm: r two triangles} we now
prove that $R_3$ must contain a triangle with area at least
$23\pi/25$.

\begin{rthreetriangles}
\label{thm: r three triangles} In a least-perimeter partition of the
sphere into four equal areas, $R_3$ contains a triangle with area at
least $23\pi/25$.
\end{rthreetriangles}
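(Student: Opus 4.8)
The plan is to follow the proof of Proposition \ref{thm: r two triangles} almost verbatim, the one genuinely new feature being that $R_3$ now lies below \emph{two} higher-pressure regions rather than one. By Proposition \ref{thm: numerical estimates two}, $R_3$ contains a component $C$ of area at least $23\pi/25$. I would argue by contradiction: assume $C$ has at least four sides, and derive a lower bound on the perimeter of $R_3$ that violates Corollary \ref{thm: numerical estimate}.

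The first step is to apply Gauss-Bonnet to $C$. Writing $P_{13}, P_{23}, P_{34}$ for the total lengths of the edges of $C$ shared with $R_1, R_2, R_4$ respectively, the edges to the higher-pressure regions $R_1$ and $R_2$ contribute signed curvature $-\kappa_{13}P_{13} - \kappa_{23}P_{23}$, while the edges to the lower-pressure region $R_4$ contribute the nonnegative term $+\kappa_{34}P_{34}$. Since $C$ has at least four vertices, each an interface meeting at $120$ degrees, its exterior angles sum to at least $4\pi/3$. Gauss-Bonnet then gives
$$\frac{23\pi}{25} - \kappa_{13}P_{13} - \kappa_{23}P_{23} + \kappa_{34}P_{34} + \frac{4\pi}{3} \leq 2\pi.$$
Discarding the nonnegative term $\kappa_{34}P_{34}$ and rearranging yields $\kappa_{13}P_{13} + \kappa_{23}P_{23} \geq 19\pi/75$.

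The crucial new ingredient is a bound on $\kappa_{23}$, since Lemma \ref{thm: second curvature bound} directly controls only $\kappa_{13}$. This is where Definition \ref{def: one} does the work: because $\kappa_{23} = \kappa_{13} - \kappa_{12}$ with $\kappa_{12} \geq 0$, we obtain $\kappa_{23} \leq \kappa_{13} < 1/14$. With both interface curvatures bounded by $1/14$, the displayed inequality forces $P_{13} + P_{23} > 14 \cdot 19\pi/75 = 266\pi/75 > 11$. But $P_{13} + P_{23}$ is only part of the perimeter of the single component $C$, so the perimeter of $R_3$ exceeds $11 > 6.62$, contradicting Corollary \ref{thm: numerical estimate} and establishing that $C$ must be a triangle.

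I expect the only place demanding care to be the bookkeeping of signs in the curvature integral and the verification that $\kappa_{23}$ is genuinely controlled. Once the relation $\kappa_{23} = \kappa_{13} - \kappa_{12} \leq \kappa_{13}$ is in hand, the remainder is the identical Gauss-Bonnet estimate used in the $R_2$ case, and no essentially new obstacle arises; the extra higher-pressure region only adds a second negative-curvature term that is absorbed by the same $1/14$ bound.
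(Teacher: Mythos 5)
Your proposal is correct and follows essentially the same route as the paper: Gauss--Bonnet on the large component of $R_3$ assumed to have at least four sides, with the curvature of all edges to higher-pressure regions bounded by $\kappa_{13}<1/14$, yielding perimeter greater than $266\pi/75>11$ and contradicting Corollary \ref{thm: numerical estimate}. Your explicit justification that $\kappa_{23}=\kappa_{13}-\kappa_{12}\leq\kappa_{13}$ is a point the paper's terser inequality $-\kappa_{13}P$ leaves implicit, but the argument is the same.
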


\begin{proof}
By Proposition \ref{thm: numerical estimates two} $R_3$ must contain
some component with area at least $23\pi/25$. For the sake of
contradiction, assume that component has at least four sides. Let
$P$ be the perimeter of this component, then Gauss-Bonnet yields
$$\frac{23\pi}{25} -\kappa_{13}P + \frac{4\pi}{3} \leq 2\pi,$$ or
$\kappa_{13}P \geq 19\pi/75.$ By Lemma \ref{thm: second curvature
bound} we get that $\kappa_{13} < 1/14$, which means that $P >
266\pi/75 > 11$  which contradicts the partition's minimality by
Corollary \ref{thm: numerical estimate}.
\end{proof}

Now we can establish an analogue to Corollary \ref{thm: not too long} for $R_3$.

\begin{nottoolong3}
\label{thm: not too long 3} In a least-perimeter partition of the
sphere into four equal areas, let $P$ be the perimeter of the large
triangle in $R_1$, and let $l$ be the length of the side incident to
$R_3$ in that triangle. Then $l \leq P/3$.
\end{nottoolong3}

\begin{proof}
If the partition is tetrahedral, then the statement is trivial.
Assume that it is not tetrahedral and, to obtain a contradiction,
that $l > P/3$. By Corollary \ref{thm: digons} no two triangles are
incident to one another in a non-tetrahedral minimizing partition.
Therefore the perimeter of $R_3$ is at least $P_3 + l$, where $P_3$
is the perimeter of the large triangle in $R_3$ (whose existence is
established in Proposition \ref{thm: r three triangles}). By Lemma
\ref{thm: circlebound} we have that the perimeter of $R_3$ is at
least $(4/3)B(23\pi/25) > 7$, which contradicts the partition's
minimality by Corollary \ref{thm: numerical estimate}.
\end{proof}

\section{The tetrahedral partition is minimizing}\label{sec: final section}
In this section we reach our goal in Theorem \ref{thm: maintheorem},
which states that the perimeter-minimizing partition of the sphere
into four equal areas is the tetrahedral partition. We require only
one lemma, a lower bound for $\kappa_{14}$.

\begin{r4curvaturebound}
\label{thm: r four curvature bound} In a non-tetrahedral
perimeter-minimizing partition of the sphere into four equal areas
we have $\kappa_{14} > 1/2$.
\end{r4curvaturebound}

\begin{proof}
Let $P$ be the perimeter of the large triangle in $R_1$ (which we
know exists by Corollary \ref{thm: maxs elimination}) and $P_r$ be
the rest of the perimeter of $R_1$. Corollary \ref{thm: numerical
estimate} then gives $$6.62 > P + P_r \geq B(23\pi/25) + P_r$$ where
the second inequality is Lemma \ref{thm: circlebound}. This yields
$P_r < 1.34$.

Since the partition is non-tetrahedral, by Theorem \ref{thm: assume
r1 conn} $R_1$ must have another component, and this component has
at most five sides (Lemma \ref{thm: conor quinn theorem}). Use
Gauss-Bonnet on this component to obtain a second inequality on
$P_r$: $$\kappa_{14}P_r\geq \frac{\pi}{3} - \frac{2\pi}{25}$$ and
combine the two inequalities to get that $\kappa_{14} > 1/2$.
\end{proof}

Now we reach our ultimate goal.

\begin{maintheorem}
\label{thm: maintheorem}
The least-perimeter partition of the sphere
into four equal areas is the tetrahedral partition.
\end{maintheorem}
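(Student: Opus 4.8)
The plan is to argue by contradiction: suppose the minimizing partition is \emph{not} tetrahedral and extract a numerical impossibility from the large triangle that Corollary \ref{thm: maxs elimination} places in $R_1$. All the needed machinery is now assembled; the key observation is that the single side of this triangle facing $R_4$ is forced to be simultaneously long (by the ``not too long'' corollaries applied to the other two sides) and strongly curved (by Lemma \ref{thm: r four curvature bound}), while Gauss-Bonnet forbids a triangle of area near $\pi$ from accumulating that much boundary curvature.

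Concretely, let $T$ denote the large triangle in $R_1$, with perimeter $P$ and area $A_T \geq 23\pi/25$. Since $T$ has an odd number of sides, Lemma \ref{thm: oddcycle} guarantees it is incident to exactly one component from each of $R_2$, $R_3$, and $R_4$; write $l_2, l_3, l_4$ for the lengths of the corresponding sides, so that $l_2 + l_3 + l_4 = P$ and these sides carry constant curvatures $\kappa_{12}, \kappa_{13}, \kappa_{14}$. Because $R_1$ has the highest pressure every edge of $T$ bulges outward, so Gauss-Bonnet (with the three exterior angles summing to $\pi$) gives
$$A_T + \kappa_{12}l_2 + \kappa_{13}l_3 + \kappa_{14}l_4 = \pi.$$
As each $\kappa_{1j} \geq 0$, discarding the $R_2$ and $R_3$ terms and using $A_T \geq 23\pi/25$ produces the upper bound $\kappa_{14}l_4 \leq 2\pi/25$.

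For the opposing lower bound, Corollaries \ref{thm: not too long} and \ref{thm: not too long 3} give $l_2 \leq P/3$ and $l_3 \leq P/3$, whence $l_4 = P - l_2 - l_3 \geq P/3$. Lemma \ref{thm: circlebound} bounds $P \geq B(23\pi/25)$, so $l_4 \geq B(23\pi/25)/3 > 1.7$. Because the partition is assumed non-tetrahedral, Lemma \ref{thm: r four curvature bound} gives $\kappa_{14} > 1/2$, and therefore $\kappa_{14}l_4 > 0.85$. This contradicts the upper bound $\kappa_{14}l_4 \leq 2\pi/25 \approx 0.26$, so no non-tetrahedral minimizer can exist and the partition must be tetrahedral.

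I expect the delicate points to be bookkeeping rather than genuine analysis. One must verify that $T$ has a \emph{distinct} side abutting each of $R_2$, $R_3$, $R_4$ --- which holds because a $3$-gon has exactly three sides and, by Lemma \ref{thm: oddcycle}, must meet all three other regions --- and that the geodesic curvature enters Gauss-Bonnet with a positive sign on every side, as it must since $R_1$ is the highest-pressure region. Once these are confirmed, the gap between the lower bound $0.85$ and the upper bound $0.26$ is wide enough that no sharp estimation is required, and the theorem follows.
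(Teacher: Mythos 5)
Your proposal is correct and follows essentially the same route as the paper: Gauss--Bonnet on the large triangle in $R_1$, the lower bound $l_4 \geq P/3 \geq B(23\pi/25)/3$ from Corollaries \ref{thm: not too long} and \ref{thm: not too long 3}, and the contradiction with $\kappa_{14} > 1/2$ from Lemma \ref{thm: r four curvature bound}. The only difference is cosmetic --- you compare the products $\kappa_{14}l_4$ directly while the paper isolates $\kappa_{14} < 1/7$ --- and your extra care about the sides of $T$ via Lemma \ref{thm: oddcycle} is a welcome bit of explicitness.
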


\begin{proof}
Assume that $R_1$ is non-tetrahedral. Let $P$ be the perimeter of
the large triangle in $R_1$ (which exists by Corollary \ref{thm:
maxs elimination}), and let $l_4$ be the length of the side of this
large triangle incident to $R_4$. By Corollaries \ref{thm: not too
long} and \ref{thm: not too long 3} and Lemma \ref{thm: circlebound}
we have $l_4 \geq P/3 \geq (1/3)B(23\pi/25)$.

Apply Gauss-Bonnet to the large triangle in $R_1$ to get
$$\frac{23\pi}{25} + \frac{\kappa_{14}B(\frac{23\pi}{25})}{3} \leq
\pi.$$ Simplify and isolate $\kappa_{14}$ to obtain $\kappa_{14}
\leq 6/\sqrt{23\cdot 77} < 1/7$, a clear contradiction of Lemma
\ref{thm: r four curvature bound}.
\end{proof}

\end{document}